\theoremstyle{definition}
\newtheorem{theorem}{Theorem}[section]
\newtheorem{corollary}[theorem]{Corollary}
\newtheorem{lemma}[theorem]{Lemma}
\newtheorem{proposition}[theorem]{Proposition}
\newtheorem{remark}[theorem]{Remark}
\newtheorem{definition}[theorem]{Definition}
\newcommand{\mb}{\mathbb}
\newcommand{\mc}{\mathcal}
\newcommand{\eul}{\mathfrak}
\newcommand{\bou}{_{\scriptscriptstyle{\rm b}}}
\newcommand{\A}{\eul A}
\newcommand{\Ao}{{\eul A}_{\scriptscriptstyle 0}}
\newcommand{\Bo}{{\eul B}_{\scriptscriptstyle 0}}
\newcommand{\vp}{\varphi}
\newcommand{\Hil}{{\mc H}}
\newcommand{\C}{\mathrm{C}^{\ast}}
\newcommand{\D}{{\mc D}}
\def\x{\relax\ifmmode {\mbox{*}}\else*\fi}
\newcommand{\id}{\mathbbm{1}}
\newcommand{\ip}[2]{\left\langle{#1}|{#2}\right\rangle}
\newcommand{\ad}{^{\mbox{\scriptsize $\dag$}}}
\newcommand{\LDH}{{\mathcal L}\ad(\D,\Hil)}
\newcommand{\LDHpi}{{\mathcal L}\ad(\D_{\scriptscriptstyle\pi},\Hil_{\scriptscriptstyle\pi})}
\newcommand{\SSA}{{\mathcal S}_{\Ao}(\A)}
\newcommand{\rep}{{\mc R}(\A,\Ao)}
\newcommand{\crep}{{\mc R}_c(\A,\Ao)}
\def\H{{\mathcal H}}
\newcommand{\wmult}{\mbox{\raisebox{1pt}{$\scriptscriptstyle{
\square}$}}}
\numberwithin{equation}{section}
\numberwithin{equation}{section}
\begin{document}

\title[]{About tensor products of Hilbert quasi *-algebras and their representability properties}
\author{Maria Stella Adamo}
\address{Dipartimento di Matematica e Informatica, Universit\`a di Palermo, I-90123 Palermo, Italy}
\email{mariastella.adamo@community.unipa.it; msadamo@unict.it}
\keywords{Tensor products of quasi *-algebras, Hilbert quasi *-algebras, Representable functionals}
\subjclass[2010]{Primary 46A32; Secondary 47L60, 46L08}

\begin{abstract} This note aims to investigate the tensor product of two given Hilbert quasi *-algebras and its properties.  The construction proposed in this note turns out to be again a Hilbert quasi *-algebra, thus interesting representability properties studed in \cite{AT} are maintained. Furthermore, if two functionals are representable and continuous respectively on the two Hilbert quasi *-algebras, then so is their tensor product.
\end{abstract}

\maketitle

\section{Introduction and basic notions}

The study of (locally convex) quasi *-algebras was initiated by G. Lassner in 1988 (see \cite{las,las1}) to give a rigorous solution to some problems coming from Quantum Statistical Mechanics. Since then, a wide range of literature appeared on this topic, both regarding applications and purely mathematical aspects of quasi *-algebras (see \cite{Ant1,Bag1,Bag2,Bag6,Frag2}). A special interest has been shown on the theory of representations on a suitable family of unbounded operators. In this context, new notions like full representability and *-semisimplicity have been introduced (e.g. \cite{Bag5,Frag2}). A central role is played by \textit{representable functionals}, i.e., those functionals that allow a GNS-like construction, in investigating structure properties of locally convex quasi *-algebras (see \cite{AT,Frag2,Trap1}).

The purpose of this note is to exhibit a tensor product construction for two given Hilbert quasi *-algebras and examine its properties. The interest in tensor products comes from physical phenomena, as they are employed to investigate two different physical systems as a joint one. In addition, very little is known about tensor products of unbounded operator algebras (e.g., \cite{adau,fiw,fiw1,hei}).

Hilbert quasi *-algebras are a particular subclass of locally convex quasi *-algebras arising as completions of Hilbert algebras with respect to the norm defined by their inner product. The main target is to build a new Hilbert quasi *-algebra that encodes the properties of the factors, by looking at them as Hilbert spaces. This would lead to defining the tensor product as a complex linear space that turns out to verify all the requirements for a quasi *-algebra on the tensor product of the *-algebras of the factors.

To obtain a tensor product Hilbert quasi *-algebra, it suffices to equip the tensor product quasi *-algebra with the inner product obtained as a combination of the two given in the factors and take its completion. In this way, we get a Hilbert quasi *-algebra, owning all the properties studied in \cite{AT}, in particular, it is fully representable and *-semisimple. Furthermore, given two representable and continuous functionals on factors, it is showed that their tensor product is still a representable and continuous functional. Hence, this highlights that the representability of functionals passes from factors to the tensor product.

The note is structured as follows. Firstly, preliminary notions about quasi *-algebras and representable functionals are recalled. In particular, for normed quasi *-algebras, in Section 2, we remind the notions of full representability, *-semisimplicity and the results obtained in the Banach case. Among them, for Hilbert quasi *-algebras, the characterization of representable and continuous functionals as elements of the Hilbert space will be employed to show the passing of representability for functionals to their tensor product. The latter fact has been proved in Section 3, in which we discuss the mentioned construction.

For the reader's convenience, we recall some preliminary notions for future use. Further details can be found in \cite{Ant1}.
\smallskip

\begin{definition}\label{quasi} A {\bf quasi *--algebra} $(\A, \Ao)$ is a pair consisting of a vector space $\A$ and a *-algebra $\Ao$ contained in $\A$ as a subspace and such that
\begin{itemize}
\item[(i)] $\A$ carries an involution $a\mapsto a^*$ extending the involution of $\Ao$;
\item[(ii)] $\A$ is  a bimodule over $\A_0$ and the module multiplications extend the multiplication of $\Ao$. In particular, the following associative laws hold:
\begin{equation}\notag \label{eq_associativity}
(xa)y = x(ay); \ \ a(xy)= (ax)y, \; \forall \, a \in \A, \,  x,y \in \Ao;
\end{equation}
\item[(iii)] $(ax)^*=x^*a^*$, for every $a \in \A$ and $x \in \Ao$.
\end{itemize}
\end{definition}

A quasi *-algebra $(\A, \Ao)$ is \emph{unital} if there is an element $\mathbbm{1}\in \Ao$, such that $a\mathbbm{1}=a=\mathbbm{1} a$, for all $a \in \A$; $\mathbbm{1}$ is unique and called the \emph{unit} of $(\A, \Ao)$.
\smallskip

A  quasi *-algebra $(\A,\Ao)$ is called a {\bf normed quasi *-algebra} if a norm
$\|\cdot\|$ is defined on $\A$ with the properties
\begin{itemize}
\item[(i)]$\|a^*\|=\|a\|, \quad \forall a \in \A$;
\item[(ii)] $\Ao$ is dense in $\A$;
\item[(iii)]for every $x \in \Ao$, the map $R_x: a \in \A \to ax \in \A$ is continuous in
$\A$.
\end{itemize}
{The continuity of the involution implies that
\begin{itemize}
\item[(iii')]for every $x \in \Ao$, the map $L_x: a \in \A \to xa \in \A$ is continuous in
$\A$.
\end{itemize}
}

\begin{definition}
If $(\A,\| \cdot \|) $ is a Banach space, we say that $(\A,\Ao)$ is a {\bf Banach quasi *-algebra}.\label{def} 
\end{definition}
The norm topology of $\A$ will be denoted by $\tau_n$. 
\smallskip

\smallskip

A special class of Banach quasi *-algebras is given by \textit{Hilbert quasi *-algebras}. Their interest comes from their rich structure, in which the norm arises from an inner product with certain properties.

\begin{definition}\label{Hilb_qalg}
	Let $\Ao$ be a *-algebra which is also a pre-Hilbert space with respect to the inner product $\ip{\cdot}{\cdot}$ such that:
	\begin{enumerate}
		\item the map $y\mapsto xy$ is continuous with respect to the norm defined by the inner product;
		\item $\ip{xy}{z}=\ip{y}{x^*z}$ for all $x,y,z\in\Ao$;
		\item $\ip{x}{y}=\ip{y^*}{x^*}$ for all $x,y\in\Ao$;
		\item $\Ao^2$ is total in $\Ao$.
	\end{enumerate}
If $\H$ denotes the Hilbert space completion of $\Ao$ with respect to the inner product $\ip{\cdot}{\cdot}$, then $(\H,\Ao)$ is called {\bf Hilbert quasi *-algebra}.
\end{definition}

\section{Representable functionals}
A convenient tool to study structure properties of quasi *-algebras is given by \textit{representable functionals}. Loosely speaking, those maps constitute a valid replacement for positive functionals in *-algebras for admitting a GNS-like triple.

In this section, we recall some properties related to representability of Banach quasi *-algebras. Among them, important notions are \textit{full representability} and \textit{*-semisimplicity}. For details, see \cite{AT,Ant1,Bag5, Frag2,Trap1}.
\smallskip

\begin{definition}\label{fun_repr} Let $(\A,\Ao)$ be a quasi *-algebra. A linear functional $\omega:\A\to\mathbb{C}$ satisfying 
\begin{itemize}
	\item[(L.1)]$\omega(x^*x) \geq 0, \quad \forall x \in \Ao$;
	\item[(L.2)]$\omega(y^*a^* x)= \overline{\omega(x^*ay)}, \quad\forall x,y \in \Ao, \forall a \in \A$;
	\item[(L.3)]$\forall a \in \A$, there exists $\gamma_a >0$ such that $$|\omega(a^*x)| \leq \gamma_a \omega(x^*x)^{1/2}, \quad \forall x\in \Ao.$$
\end{itemize}
is called {\bf representable} on the quasi *-algebra $(\A,\Ao)$.

The family of representable functionals on the quasi *-algebra $(\A,\Ao)$ is denoted by $\rep$.
\end{definition}

The term \textit{representable} is justified by the existence of a GNS-like triple of a *-representation $\pi_{\omega}$, a Hilbert space $\H_{\omega}$ and a map $\lambda_\omega$. In this context, we need a proper definition of *-representation of the quasi *-algebra $(\A,\Ao)$ and a family of operators on which we represent it.

\smallskip

Let $\Hil$ be a Hilbert space and let $\D$ be a dense linear subspace of $\Hil$. We denote by $\LDH$ the following family of closable operators
$$\LDH=\left\{X:\D\to\Hil:\mathcal{D}(X)=\D, \mathcal{D}(X^{\ast})\supset\D\right\}.$$
$\LDH$ is a $\mathbb{C}-$vector space with the usual sum and scalar multiplication. If we define the involution $\dagger$ and partial multiplication $\wmult$ as
$$X\mapsto X^{\dagger}\equiv X^{\ast}\upharpoonright_{\D}\quad\text{and}\quad X\wmult Y=X^{\dagger\ast}Y,$$
then $\LDH$ is a partial *-algebra defined in \cite{Ant1}.
\smallskip

\begin{definition}
	A {\bf *-representation} of a quasi *-algebra $(\A,\Ao)$ is a *-homomorphism $\pi:\A\to\LDHpi$, where $\D_{\scriptscriptstyle\pi}$ is a dense subspace of the Hilbert space $\Hil_{\scriptscriptstyle\pi}$, with the following properties:
	\begin{itemize}
		\item[(i)] $\pi(a^{\ast})=\pi(a)^{\dagger}$ for all $a\in\A$;
		\item[(ii)] if $a\in\A$ and $x\in\Ao$, then $\pi(a)$ is a left multiplier of $\pi(x)$ and $\pi(a)\wmult\pi(x)=\pi(ax)$.
	\end{itemize}
\end{definition}

A *-representation $\pi$ is 
\begin{itemize}
	\item {\em cyclic} if $\pi(\Ao)\xi$ is dense in $\Hil_{\scriptscriptstyle\pi}$ for some $\xi\in\D_{\scriptscriptstyle\pi}$;
	\item {\em closed} if $\pi$ coincides with its closure $\widetilde{\pi}$ defined in \cite[Section 2]{Trap1}.
\end{itemize}
If $(\A,\Ao)$ has a unit $\id$, then we suppose that $\pi(\id)=I_\D$, the identity operator of $\D$.
\smallskip

\begin{theorem}\cite{Trap1}\label{thm_repr}
	Let $(\A, \Ao)$ be a quasi *-algebra with unit
	$\id$ and let $\omega$ be a linear functional on $(\A, \Ao)$ that satisfies the conditions (L.1)-(L.3) of Definition \ref{fun_repr}.	Then, there exists a triple $(\pi_\omega,\lambda_\omega,\Hil_\omega)$ such that:
	\begin{itemize}
		\item $\pi_\omega$ is a closed cyclic *-representation ${\pi}_\omega$ of $(\A,\Ao)$, with cyclic vector $\xi_\omega$;
		\item $\lambda_\omega$ is a linear map of $\A$ into $\lambda_\omega(\Ao)=\D_{\pi_\omega}$, $\xi_\omega=\lambda_\omega(\id)$ and $\pi_\omega(a)\lambda_\omega(x)=\lambda_\omega(ax)$, for every $a\in\A$ and $x\in\Ao$;
		\item $ \omega(a)=\ip{{\pi}_\omega(a)\xi_\omega}{\xi_\omega}$, for every $a\in \A.$ 
	\end{itemize}
	This representation is unique up to unitary equivalence.
\end{theorem}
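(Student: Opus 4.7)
My plan is to carry out a GNS-style construction tailored to the quasi *-algebra setting: first build a Hilbert space from $\omega$ on $\Ao$, then extend the canonical map to all of $\A$ using (L.3), and finally define the representation by left multiplication, with (L.2) guaranteeing the *-property.

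\textbf{Step 1: Construction of $\Hil_\omega$.} I would start on $\Ao$, where condition (L.1) makes $\langle x,y\rangle_\omega:=\omega(y^*x)$ a positive semidefinite sesquilinear form. Let $N_\omega:=\{x\in\Ao:\omega(x^*x)=0\}$; by the Cauchy--Schwarz inequality for $\langle\cdot,\cdot\rangle_\omega$, $N_\omega$ is a left ideal of $\Ao$, so the quotient $\Ao/N_\omega$ inherits a genuine inner product. Let $\Hil_\omega$ be its completion and let $\lambda_\omega^{\,0}:\Ao\to\Hil_\omega$, $\lambda_\omega^{\,0}(x)=x+N_\omega$, be the canonical map, whose range is dense in $\Hil_\omega$. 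Set $\xi_\omega:=\lambda_\omega^{\,0}(\id)$.

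\textbf{Step 2: Extension of $\lambda_\omega$ to $\A$.} For each $a\in\A$, the antilinear form $\lambda_\omega^{\,0}(x)\mapsto\omega(x^*a)=\overline{\omega(a^*x)}$ on the dense subspace $\lambda_\omega^{\,0}(\Ao)$ is well defined and bounded: indeed (L.3) yields
\[
|\omega(a^*x)|\le \gamma_a\,\omega(x^*x)^{1/2}=\gamma_a\,\|\lambda_\omega^{\,0}(x)\|_\omega,
\]
which also forces the form to vanish when $\lambda_\omega^{\,0}(x)=0$. By the Riesz representation theorem there is a unique $\lambda_\omega(a)\in\Hil_\omega$ with $\langle\lambda_\omega(a),\lambda_\omega^{\,0}(x)\rangle_\omega=\omega(x^*a)$ for every $x\in\Ao$; linearity in $a$ is inherited from $\omega$, and on $\Ao$ this recovers $\lambda_\omega^{\,0}$. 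In particular $\lambda_\omega(\id)=\xi_\omega$, and setting $x=\id$ gives $\omega(a)=\langle\lambda_\omega(a),\xi_\omega\rangle_\omega$.

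\textbf{Step 3: The representation $\pi_\omega$.} Put $\D_{\pi_\omega}:=\lambda_\omega(\Ao)$ and define $\pi_\omega^{\,0}(a)\lambda_\omega(x):=\lambda_\omega(ax)$ for $a\in\A,\ x\in\Ao$; this is unambiguous because $\lambda_\omega(ax)$ is the Riesz vector just constructed. Linearity and the module laws in Definition~\ref{quasi} give $\pi_\omega^{\,0}(a)\pi_\omega^{\,0}(x)=\pi_\omega^{\,0}(ax)$ and hence (using $\D(\pi_\omega^{\,0}(x))=\D_{\pi_\omega}$) the partial-multiplication identity $\pi_\omega^{\,0}(a)\wmult \pi_\omega^{\,0}(x)=\pi_\omega^{\,0}(ax)$. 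The key *-identity $\pi_\omega^{\,0}(a)^{\dagger}\supset\pi_\omega^{\,0}(a^*)$ is the place where (L.2) is used: for $x,y\in\Ao$,
\[
\langle \pi_\omega^{\,0}(a)\lambda_\omega(x),\lambda_\omega(y)\rangle_\omega
=\omega(y^*ax)=\overline{\omega(x^*a^*y)}
=\langle\lambda_\omega(x),\pi_\omega^{\,0}(a^*)\lambda_\omega(y)\rangle_\omega,
\]
so $\lambda_\omega(\Ao)\subset\D(\pi_\omega^{\,0}(a)^*)$ and $\pi_\omega^{\,0}(a)\in\LDHpi$. Thus $\pi_\omega^{\,0}$ is a *-representation on $\D_{\pi_\omega}$ with cyclic vector $\xi_\omega$ (since $\pi_\omega^{\,0}(x)\xi_\omega=\lambda_\omega(x)$ for $x\in\Ao$).

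\textbf{Step 4: Closure and uniqueness.} Finally I would pass to the closure $\pi_\omega:=\widetilde{\pi_\omega^{\,0}}$ from Trapani's construction cited in \cite{Trap1}, which preserves cyclicity, *-homomorphism and the formula $\omega(a)=\langle\pi_\omega(a)\xi_\omega,\xi_\omega\rangle$ by continuity. Uniqueness up to unitary equivalence is the standard GNS argument: given another triple $(\pi',\lambda',\Hil')$ with cyclic vector $\xi'=\lambda'(\id)$ and $\omega(a)=\langle\pi'(a)\xi',\xi'\rangle$, the map $\pi_\omega(x)\xi_\omega\mapsto\pi'(x)\xi'$ is well-defined and isometric on the cyclic dense subspace and intertwines the two representations, and the extension to $\A$ of $\lambda_\omega$ is rigid by the Riesz step. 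The main technical point I expect to be delicate is verifying that the Riesz-defined extension $\lambda_\omega$ is compatible with left multiplication (so that Step~3 actually produces a $\dagger$-closable operator on $\D_{\pi_\omega}$), and this is precisely what (L.2) and (L.3) are designed to supply.
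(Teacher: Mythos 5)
Your construction is exactly the one behind the cited result: the paper itself gives no proof of Theorem \ref{thm_repr} (it is quoted from \cite{Trap1}), and the proof there is precisely the quasi *-algebra GNS scheme you describe --- quotient $\Ao/N_\omega$ and completion, Riesz-type extension of $\lambda_\omega$ to all of $\A$ via (L.3), representation by left multiplication with (L.2) giving the $\dagger$-property, then passage to the closure and the standard unitary-equivalence argument. So in approach your proposal matches the source, and Steps 1, 2 and 4 are carried out correctly.

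The one place where your justification does not actually prove what is needed is the well-definedness claim in Step 3. Saying that the definition $\pi^0_\omega(a)\lambda_\omega(x):=\lambda_\omega(ax)$ ``is unambiguous because $\lambda_\omega(ax)$ is the Riesz vector just constructed'' only establishes that $\lambda_\omega(ax)$ exists; it does not show that the assignment is independent of the representative, i.e.\ that $z\in N_\omega$ forces $\lambda_\omega(az)=0$. That is the actual content, and it requires applying (L.3) not to $a$ itself but to the elements $a^*y\in\A$, $y\in\Ao$: since $(a^*y)^*z=y^*(az)$ by the module laws,
\[
\bigl|\langle \lambda_\omega(az),\lambda^0_\omega(y)\rangle_\omega\bigr|
=\bigl|\omega\bigl(y^*(az)\bigr)\bigr|
=\bigl|\omega\bigl((a^*y)^*z\bigr)\bigr|
\le \gamma_{a^*y}\,\omega(z^*z)^{1/2}=0,
\]
and density of $\lambda^0_\omega(\Ao)$ in $\Hil_\omega$ then gives $\lambda_\omega(az)=0$. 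You do flag this compatibility as the delicate point in your closing remark and correctly attribute it to (L.2)/(L.3), so the gap is minor and fixable; with this two-line argument inserted, the rest of Step 3 (the $\dagger$-identity via (L.2), the partial multiplication identity, cyclicity) and Step 4 go through as you wrote them.
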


To every $\omega \in \rep$, we can associate the sesquilinear form $\varphi_{\omega}$ defined on $\Ao\times\Ao$ as
\begin{equation} \label{sesqu_ass} \vp_\omega(x,y):= \omega(y^*x), \quad x,y \in \Ao.\end{equation}
It is easy to see that
\begin{itemize}
\item[(i)] $\vp_\omega(x,x) \geq 0,$ for every $x \in \Ao$.
\item[(ii)] $\vp_\omega(xy,z)=\vp_\omega (y,x^*z)$ for every $x, y, z \in \Ao$.
\end{itemize}
\smallskip

If $(\A, \Ao)$ is a normed quasi *-algebra, we denote by $\crep$ the subset of $\rep$ consisting of continuous functionals.

As shown in \cite{Frag2} for locally convex quasi *-algebras, if $\omega\in \crep$, then the sesquilinear form $\vp_\omega$ defined in \eqref{sesqu_ass} is {\em closable}; that is, $\vp_\omega(x_n,x_n)\to 0$, for every sequence $\{x_n\}\subset \Ao$ such that $$\mbox{$\|x_n\|\to 0$ and $\vp_\omega(x_n-x_m, x_n-x_m)\to 0$}.$$

In this case, $\vp_\omega$  has a closed extension $\overline{\vp}_\omega$ to a dense domain $\D(\overline{\vp}_\omega)$ containing $\Ao$. For Banach quasi *-algebras this result can be improved.

\begin{proposition}\cite{AT}\label{prop1}
	Let $(\A, \Ao)$ be a Banach quasi *-algebra with unit $\id$,  $\omega \in \crep$ and $\vp_{\omega}$ the associated sesquilinear form on $\Ao \times \Ao$ defined as in \eqref{sesqu_ass}. Then $\D(\overline{\vp}_\omega)=\A$;  hence $\overline{\vp}_\omega$ is everywhere defined and bounded.
\end{proposition}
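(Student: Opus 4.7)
The plan is to construct a bounded sesquilinear form $\tilde{\vp}_\omega$ on $\A\times\A$ extending $\vp_\omega$ through the GNS map $\lambda_\omega:\A\to\H_\omega$ of Theorem \ref{thm_repr}, and then invoke minimality of the closure to conclude that $\overline{\vp}_\omega=\tilde{\vp}_\omega$ is defined on all of $\A$ and automatically bounded.

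The first step is to extend the familiar identity $\ip{\lambda_\omega(y)}{\lambda_\omega(x)}=\vp_\omega(y,x)=\omega(x^*y)$, valid for $x,y\in\Ao$, to the wider pairing $a\in\A$, $x\in\Ao$. Using that $\lambda_\omega(a)=\pi_\omega(a)\xi_\omega$, that $\lambda_\omega(x)\in\D_{\pi_\omega}$, and that $\pi_\omega(a)^\dagger=\pi_\omega(a^*)$ with $\pi_\omega(a^*)\lambda_\omega(x)=\lambda_\omega(a^*x)$, one obtains $\ip{\lambda_\omega(a)}{\lambda_\omega(x)}=\ip{\xi_\omega}{\lambda_\omega(a^*x)}=\overline{\omega(a^*x)}=\omega(x^*a)$, the last equality being (L.2) applied with $y=\id$.

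The second and key step is to show that $\lambda_\omega:\A\to\H_\omega$ is bounded, via the closed graph theorem. Suppose $a_n\to a$ in $\A$ and $\lambda_\omega(a_n)\to\eta$ in $\H_\omega$. For each $x\in\Ao$, continuity of $\omega$ together with continuity of the left multiplication $L_{x^*}:\A\to\A$ give $\omega(x^*a_n)\to\omega(x^*a)$, which by Step one translates into $\ip{\lambda_\omega(a_n)}{\lambda_\omega(x)}\to\ip{\lambda_\omega(a)}{\lambda_\omega(x)}$; comparing with $\ip{\lambda_\omega(a_n)}{\lambda_\omega(x)}\to\ip{\eta}{\lambda_\omega(x)}$ forces $\eta-\lambda_\omega(a)\perp\lambda_\omega(\Ao)$, and density of $\lambda_\omega(\Ao)=\D_{\pi_\omega}$ in $\H_\omega$ yields $\eta=\lambda_\omega(a)$. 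Hence $\lambda_\omega$ is closed, and as $\A$ is a Banach space and $\H_\omega$ is Hilbert, the closed graph theorem supplies $M>0$ with $\|\lambda_\omega(a)\|_{\H_\omega}\le M\|a\|$ for every $a\in\A$.

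Finally, define $\tilde{\vp}_\omega(a,b):=\ip{\lambda_\omega(a)}{\lambda_\omega(b)}$ on $\A\times\A$; it is sesquilinear, satisfies $|\tilde{\vp}_\omega(a,b)|\le M^2\|a\|\|b\|$, and agrees with $\vp_\omega$ on $\Ao\times\Ao$. Density of $\Ao$ in $\A$ and boundedness of $\tilde{\vp}_\omega$ together make $\Ao$ a core for $\tilde{\vp}_\omega$ (since for $a\in\A$ and any $x_n\to a$ in $\Ao$ one has $\tilde{\vp}_\omega(x_n-x_m,x_n-x_m)\le M^2\|x_n-x_m\|^2\to 0$), so the minimal closed extension $\overline{\vp}_\omega$ of $\vp_\omega$ coincides with $\tilde{\vp}_\omega$; hence $\D(\overline{\vp}_\omega)=\A$ and $\overline{\vp}_\omega$ is bounded. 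I expect the main obstacle to be Step one, namely correctly handling the partial multiplication of $\LDHpi$ to justify the identity $\pi_\omega(a)^{*}\lambda_\omega(x)=\pi_\omega(a^*)\lambda_\omega(x)=\lambda_\omega(a^*x)$ for $a\in\A$ and $x\in\Ao$.
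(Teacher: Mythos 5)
Your proof is correct and takes essentially the same route as the source: the paper states Proposition \ref{prop1} without proof, citing \cite{AT}, and the argument there likewise proves continuity of $\lambda_\omega:\A\to\H_\omega$ by the closed graph theorem (via the identity $\ip{\lambda_\omega(a)}{\lambda_\omega(x)}=\omega(x^*a)$, continuity of $\omega$ and of left multiplication, and density of $\lambda_\omega(\Ao)=\D_{\pi_\omega}$), and then identifies $\overline{\vp}_\omega$ with the bounded, everywhere defined form $(a,b)\mapsto\ip{\lambda_\omega(a)}{\lambda_\omega(b)}$. No gaps to report.
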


Consider now the set
$$\A_{\mc R}:= \bigcap_{\omega \in {\mc R}_c(\A,\Ao)}\D(\overline{\vp}_\omega).$$

If ${\mc R}_c(\A,\Ao)=\{0\}$, we put $\A_{\mc R}=\A$. Now set
$$\Ao^+:=\left\{\sum_{k=1}^n x_k^* x_k, \, x_k \in \Ao,\, n \in {\mb N}\right\}.$$
Then $\Ao^+$ is a wedge in $\Ao$ and we call the elements of $\Ao^+$ \emph{positive elements of} $\Ao$.
As in \cite{Frag2}, we call {positive elements of} $\A$ the members of $\overline{\Ao^+}^{\tau_n}$. We set $\A^+:=\overline{\Ao^+}^{\tau_n}$.
\smallskip

A linear functional on $\A$ is \textit{positive} if $\omega(a)\geq0$ for every $a\in\A^+$. 

\begin{definition} A family of positive linear functionals $\mc F$ on
$(\A[\tau_n], \Ao)$ is called {\bf sufficient} if for every $a \in
\A^+$, $a \neq 0$, there exists $\omega \in {\mc F}$ such that $\omega
(a)>0$.
\end{definition}

\begin{definition}\label{fully_rep} A normed quasi $^{\ast}$-algebra
$(\A[\tau_n],\Ao)$ is called {\bf fully representable} if ${\mc
R}_c(\A,\Ao)$ is sufficient and $\A_{\mc R}=\A$.
\end{definition}

We denote by $\SSA$ the family of all continuous sesquilinear forms $\Omega:\A\times\A\to\mathbb{C}$ such that
\begin{itemize}
	\item[(i)] $\Omega(a,a)\geq0$ for every $a\in\A$;
	\item[(ii)] $\Omega(ax,y)=\Omega(x,a^*y)$ for every $a\in\A$, $x,y\in\Ao$;
	\item[(iii)]$ |\Omega(a,b)|\leq \|a\|\|b\|$, for all $a, b\in \A.$
\end{itemize}

\begin{definition}\label{def1}
A normed quasi *-algebra $(\mathfrak{A}[\tau_n],\Ao)$ is called {\bf *-semi\-simple} if, for every $0\neq a\in\A$, there exists $\Omega\in\mathcal{S}_{\Ao}(\A)$ such that $\Omega(a,a)>0$.
\end{definition}

Proposition \ref{prop1} is useful to show the following result.

\begin{theorem}\cite{AT}\label{thm_fullrep_semis} Let $(\A, \Ao)$ be a Banach quasi *-algebra with unit $\id$. The following statements are equivalent. 
\begin{itemize}
\item[(i)]$\crep$ is sufficient.
\item[(ii)]$(\A,\Ao)$ is fully representable.
\end{itemize}
If the following condition of positivity $(P)$ 
\begin{equation}\notag\label{P} a\in\A\;\text{and}\;\omega(x^*ax)\geq0\;\;\forall\omega\in\mathcal{R}_c(\A,\Ao)\;\text{and}\;x\in\Ao\;\;\Rightarrow\;\;a\in\A^+
\end{equation}
holds, (i) and  (ii) are equivalent to the following
\begin{itemize}
\item[(iii)]$(\A,\Ao)$ is *-semisimple.
\end{itemize}
\end{theorem}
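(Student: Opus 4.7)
The plan is to handle the two equivalences separately. For $(i)\Leftrightarrow(ii)$: the direction $(ii)\Rightarrow(i)$ is immediate from Definition~\ref{fully_rep}. For $(i)\Rightarrow(ii)$, Proposition~\ref{prop1} gives $\D(\overline{\vp}_\omega)=\A$ for every $\omega\in\crep$, so $\A_{\mc R}=\bigcap_{\omega\in\crep}\D(\overline{\vp}_\omega)=\A$, which with (i) yields full representability.

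For $(i)\Leftrightarrow(iii)$ under (P), I would first set up two auxiliary facts. The first is an essentially bijective correspondence between $\crep$ and $\SSA$: given $\omega\in\crep$, Proposition~\ref{prop1} shows the extended form $\overline{\vp}_\omega$ is bounded, so after rescaling by its operator bound it lies in $\SSA$; given $\Omega\in\SSA$, the functional $\omega_\Omega(a):=\Omega(a,\id)$ belongs to $\crep$, with (L.1)--(L.3) and continuity following from properties (i)--(iii) of $\SSA$ together with Cauchy--Schwarz for $\Omega$. A short check shows these two constructions invert one another (modulo the scaling constant, which is immaterial for separation properties). The second fact is that if $\omega\in\crep$ and $x\in\Ao$, then $\omega_x(b):=\omega(x^*bx)$ again lies in $\crep$: conditions (L.1)--(L.3) descend via the substitutions $y\leadsto yx$ and $\gamma_a\leadsto\gamma_{ax}$, while continuity comes from the continuity of $L_{x^*}$ and $R_x$ on the Banach quasi *-algebra.

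For $(iii)\Rightarrow(i)$, fix $0\neq a\in\A^+$ and suppose, for contradiction, $\omega(a)=0$ for every $\omega\in\crep$. Applying the assumption to $\omega_x\in\crep$ gives $\omega(x^*ax)=0$, and through the correspondence $\Omega(x^*ax,\id)=0$ for every $\Omega\in\SSA$ and $x\in\Ao$. Property (ii) of $\SSA$ with $a':=x^*a\in\A$ rewrites the left side as $\Omega(x,a^*x)$; using $a=a^*$ (since $\A^+=\overline{\Ao^+}^{\tau_n}$ consists of self-adjoint elements) and conjugating, I obtain $\Omega(ax,x)=0$. The map $(y,z)\mapsto\Omega(ay,z)$ is sesquilinear on $\Ao\times\Ao$ with vanishing diagonal, so polarization forces $\Omega(ay,z)=0$ for all $y,z\in\Ao$; continuity in $z$ and density of $\Ao$ in $\A$ extend this to $\Omega(ay,b)=0$ for every $b\in\A$, and taking $y=\id$ yields $\Omega(a,a)=0$ for every $\Omega\in\SSA$, contradicting (iii).

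For $(i)\Rightarrow(iii)$, fix $0\neq a\in\A$ and suppose $\Omega(a,a)=0$ for all $\Omega\in\SSA$. Cauchy--Schwarz applied to $\Omega$ yields $\Omega(a,b)=0$ for every $b\in\A$, so specializing $b=\id$ gives $\omega(a)=0$ for all $\omega\in\crep$. The $\omega_x$ trick then produces $\omega(x^*ax)=0\geq 0$ for every $\omega\in\crep$ and $x\in\Ao$, so hypothesis (P) forces $a\in\A^+$. But sufficiency from (i) would then produce $\omega\in\crep$ with $\omega(a)>0$, contradicting $\omega(a)=0$. The main obstacle is the polarization-plus-density step in $(iii)\Rightarrow(i)$, which is where the vanishing on the three-fold sandwich $\omega(x^*ax)$ is promoted to the vanishing of $\Omega(a,a)$; once the bijection $\crep\leftrightarrow\SSA$ and the stability of $\crep$ under conjugation by elements of $\Ao$ are in place, everything else is bookkeeping with the defining identities.
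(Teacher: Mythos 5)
The paper itself gives no proof of this theorem---it is quoted verbatim from \cite{AT}---so there is nothing internal to compare against; judged on its own merits, your argument is correct, and it follows essentially the standard route of \cite{AT} and \cite{Frag2}: the equivalence (i)$\Leftrightarrow$(ii) via Proposition \ref{prop1}, and the equivalence with (iii) via the correspondence between $\crep$ and $\SSA$ together with stability of $\crep$ under sandwiching. Note that your functional-side device $\omega_x(b)=\omega(x^*bx)$ is the same mechanism as the form-side functionals $a\mapsto\Omega(ax,x)$ used in the cited sources, since $\Omega(ax,x)=\Omega(x^*ax,\id)$; the only details you gloss over (that $\omega(a)\geq 0$ for $a\in\A^+$, $\omega\in\crep$, which turns ``not $>0$'' into ``$=0$'', and the density--continuity check that $\overline{\vp}_\omega$ satisfies the invariance property of $\SSA$) are routine.
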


\begin{remark}\label{rem_thm}
	The condition $(P)$ is not needed to show the implication $(iii)\Rightarrow(ii)$ of Theorem \ref{thm_fullrep_semis}.
\end{remark}


For a Hilbert quasi *-algebra $(\H,\Ao)$, representable and continuous funciontals are in 1-1 correspondence with a certain family of elements in $\H$. 

\begin{definition}
 Let $(\H,\Ao)$ be a Hilbert quasi *-algebra. An element $\xi\in\H$ is called
 \begin{itemize}
 	\item[(i)] {\bf weakly positive} if the operator $L_{\xi}:\Ao\to\H$ defined as $L_{\xi}(x)=\xi x$ is positive.
 	\item[(ii)] {\bf bounded} if the operator $L_{\xi}:\Ao\to\H$ is bounded.
 \end{itemize}
The set of all weakly positive (resp. bounded) elements will be denoted as $\H^+_w$ (resp. $\H_{\bou}$), see \cite{AT,ct1}.
\end{definition}

The name \emph{weakly positive} is justified by the existence of a stronger notion of positivity introduced before (e.g. \cite{Frag2}). The comparison between them has been investigated in \cite{AT}.

\begin{remark}\label{bou}
	A Hilbert quasi *-algebra $(\H,\Ao)$ is always \emph{fully closable}, i.e., for every $\xi\in\H$ the densely defined operator $L_{\xi}:\Ao\to\H$ is closable. If $\xi\in\H_{\bou}$, then $\overline{L}_{\xi}$ is everywhere defined and bounded.
\end{remark}

\begin{theorem}\cite{AT}\label{Hrepc}
	Let $(\H,\Ao)$ be a Hilbert quasi *-algebra. Then $\omega\in\mathcal{R}_c(\H,\Ao)$ if, and only if, there exists a unique weakly positive bounded element $\eta\in\H$ such that
	$$\omega(\xi)=\ip{\xi}{\eta},\quad\forall\xi\in\H.$$
\end{theorem}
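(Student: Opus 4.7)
The plan is to establish the biconditional and uniqueness separately, with the Riesz representation theorem as the pivotal tool.

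For the ``$\Leftarrow$'' direction, given $\eta\in\H^+_w\cap\H_\bou$, I would define $\omega(\xi):=\ip{\xi}{\eta}$. Continuity with $\|\omega\|\leq\|\eta\|$ follows from the Cauchy--Schwarz inequality. Conditions (L.1)--(L.3) of Definition \ref{fun_repr} are then verified by rewriting each as an inner-product expression and invoking axioms (2) and (3) of Definition \ref{Hilb_qalg}, extended by continuity to $\H$ (possible because both left and right multiplication by fixed elements of $\Ao$ are continuous on $\H$, using Definition \ref{Hilb_qalg}(1) and continuity of the involution). Condition (L.1) is precisely weak positivity of $\eta$ after these axiom manipulations; (L.2) reduces to a symmetry that reflects the Hermitian character of $\eta$ forced by weak positivity; and (L.3) follows by applying Cauchy--Schwarz to the bounded extension $\overline{L}_\eta$ (available via Remark \ref{bou}), producing $\gamma_a$ of the form $C\|a\|$ with $C$ depending on $\|\overline{L}_\eta\|$.

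For the ``$\Rightarrow$'' direction, since $\omega\in\hcrep$ is a continuous linear functional on the Hilbert space $\H$, the Riesz representation theorem provides a unique $\eta\in\H$ with $\omega(\xi)=\ip{\xi}{\eta}$ for every $\xi\in\H$. It remains to establish that $\eta\in\H^+_w\cap\H_\bou$. Weak positivity is recovered by reading (L.1) through the axioms of Definition \ref{Hilb_qalg}: $\omega(x^*x)\geq 0$ translates, via axiom (2) in its continuously extended form, to $\ip{L_\eta(x)}{x}\geq 0$ for every $x\in\Ao$. For boundedness, I would invoke Proposition \ref{prop1}: the associated sesquilinear form $\vp_\omega$ extends to a bounded form $\overline{\vp}_\omega$ on $\H\times\H$, and by \eqref{sesqu_ass} together with the axioms this extension encodes an inner-product expression involving $L_\eta$, from which the uniform bound $\|L_\eta(x)\|\leq C\|x\|$ for $x\in\Ao$ can be extracted. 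Uniqueness of $\eta$ is immediate from Riesz.

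The main obstacle is the careful bookkeeping of axioms (2) and (3) of Definition \ref{Hilb_qalg}, which bridge the abstract functional conditions (L.1)--(L.3) and the geometric conditions on $\eta$. Extracting the uniform operator-norm bound on $L_\eta$ from the scalar estimate (L.3) is the technically delicate step, and is most cleanly accomplished by routing through Proposition \ref{prop1}; the rest of the argument is an application of Riesz representation and elementary inequalities.
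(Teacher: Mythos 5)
The paper itself gives no proof of Theorem \ref{Hrepc}: it is quoted from \cite{AT}, and your proposal reproduces essentially the argument of that reference --- Riesz representation for the ``only if'' direction, the continuously extended identities $\ip{xy}{z}=\ip{y}{x^*z}$ and $\ip{x}{y}=\ip{y^*}{x^*}$ to convert (L.1)--(L.3) into weak positivity and boundedness of $\eta$, Cauchy--Schwarz for the bounded positive extension of the multiplication operator in the converse direction, and Proposition \ref{prop1} to extract the uniform operator bound. Your sketch is sound, but the bookkeeping you defer hides the two steps that carry real weight: (L.1) directly yields positivity of $x\mapsto x\eta$, and passing from this to the paper's $L_\eta(x)=\eta x$, and to the hermiticity $\eta^*=\eta$ --- without which (L.2) cannot be verified in the ``if'' direction --- requires the identities $\ip{x\eta}{x}=\ip{\eta}{x^*x}$ and $\ip{\eta x}{x}=\ip{\eta}{xx^*}$ together with the totality of $\Ao^2$ from Definition \ref{Hilb_qalg}(4). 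One further caveat: Proposition \ref{prop1} is stated for Banach quasi *-algebras \emph{with unit}, while Theorem \ref{Hrepc} assumes no unit, so strictly speaking your route through Proposition \ref{prop1} settles the unital case, which is the only one this paper actually uses.
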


\section{Construction and properties of a tensor product Hilbert quasi *-algebra}
In this section, we will present a way to construct a tensor product quasi *-algebra of two given Hilbert quasi *-algebras. Our aim is to preserve structural properties of the factors in the tensor product, especially those related to representability. For further reading on algebraic tensor products see for instance \cite{cha}, for topological tensor products refer to \cite{deflo,fra,lau}. 
\smallskip

Let $(\H_1,\Ao)$ and $(\H_2,\Bo)$ be two given Hilbert quasi *-algebras. The first step in this construction consists of building up the algebraic tensor product. For convenience, we can assume that both the Hilbert quasi *-algebras are unital.

Observe that both $\H_1$ and $\H_2$ are bimodules, respectively over $\Ao$ and $\Bo$, thus we expect the tensor product to be a bimodule over a suitable *-algebra given by the tensor product *-algebra of $\Ao$ and $\Bo$.

\begin{definition}\label{ten_prod}\cite{hel2}
	Let $H, K, L$ be Hilbert spaces over the complex field and $\mathcal{R}:H\times K\to L$ a bilinear map. $\mathcal{R}$ is called a \textbf{Schimdt bilinear map} if for each total system $\{e_{\mu};\mu\in\Lambda_1\}$ in $H$ and $\{e_{\nu};\nu\in\Lambda_2\}$ in $K$ and each $z\in L$ we have $$\sum\{|\ip{\mathcal{R}(e_\mu,e_\nu)}{z}|^2:\mu\in\Lambda_1,\nu\in\Lambda_2\}<\infty.$$
\end{definition}

Note that the sum computed above depends only on the choice of $z$ and not on the chosen systems.

\begin{definition}\cite[Ch. 2]{hel2}
	Let $H, K, L$ be Hilbert spaces over the complex field and $\mathcal{R}:H\times K\to L$ a Schmidt bilinear map. The number
	$$\sup_{\|z\|_{\scriptscriptstyle L}\leq1}\sum\{|\ip{\mathcal{R}(e_\mu,e_\nu)}{z}|^2:\mu\in\Lambda_1,\nu\in\Lambda_2\}$$
	is said to be the \textbf{Schimdt norm} of the bilinear map $\mathcal{R}$.
\end{definition}

\begin{definition}\cite[Ch. 2]{hel2}
    Let $H, K, \Theta$ be Hilbert spaces over the complex field and $\Psi:H\times K\to \Theta$ a Schmidt bilinear map. The pair $(\Theta,\Psi)$ is a {\bf Hilbert tensor product of $H$ and $K$} if it has the universal property with respect to all Schmidt bilinear maps $\Psi':H\times K\to \Theta'$, that are Schimdt norm contractions, where $\Theta'$ is some complex Hilbert space. 
\end{definition}
	
The pair $(\Theta,\Psi)$ exists and it is unique, in the sense that if $(E',\Psi')$ is another tensor product of the Hilbert spaces $H,K$ as in Definition \ref{ten_prod}, then there is an algebraic isomorphism $i:\Theta\to\Theta'$ such that $i\circ\Psi=\Psi'$. Hence, the unique Hilbert tensor product will be denoted by $H\widehat{\otimes}K$.

\begin{theorem}\label{cts_op}\cite[Ch. 2]{hel2}
	Let $S:K_1\to K_2$ and $T:L_1\to L_2$ be bounded operators between Hilbert spaces. Then there exists a unique bounded operator $S\widehat{\otimes} T:K_1\widehat{\otimes} L_1\to K_2\widehat{\otimes} L_2$ such that $(S\widehat{\otimes} T)(x\otimes y)=S(x)\otimes T(y)$ for all $x\in K_1$, $y\in L_1$. Moreover $\|S\widehat{\otimes} T\|=\|S\|\,\|T\|$.
\end{theorem}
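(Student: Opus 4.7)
The plan is to build $S \widehat{\otimes} T$ by defining it first on the linear span of simple tensors in $K_1 \widehat{\otimes} L_1$, which is dense by the construction of the Hilbert tensor product, then establish boundedness there with constant $\norm{S}\,\norm{T}$, and extend by continuity.

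The substantive step is the boundedness estimate on a finite sum $u = \sum_{i=1}^n x_i \otimes y_i$ in the algebraic tensor product. The cleanest route is to factor $S \otimes T = (S \otimes I_{L_2}) \circ (I_{K_1} \otimes T)$ and treat the two factors separately. For $I_{K_1} \otimes T$, apply Gram--Schmidt to $x_1,\dots,x_n$ to rewrite $u$ in the form $u = \sum_{k=1}^m e_k \otimes z_k$ with $\{e_k\}$ orthonormal in $K_1$ and $z_k \in L_1$. Using the defining inner product identity $\ip{x_1 \otimes y_1}{x_2 \otimes y_2} = \ip{x_1}{x_2}\ip{y_1}{y_2}$ on simple tensors together with the orthonormality of $\{e_k\}$, one obtains $\norm{u}^2 = \sum_k \norm{z_k}^2$ and
\[
\norm{(I_{K_1} \otimes T)u}^2 = \sum_k \norm{T z_k}^2 \le \norm{T}^2 \sum_k \norm{z_k}^2 = \norm{T}^2 \norm{u}^2.
\]
An analogous argument in the other tensor slot yields $\norm{(S \otimes I_{L_2}) v} \le \norm{S}\,\norm{v}$ on the algebraic tensor product of $K_1$ and $L_2$, and composing gives $\norm{(S \otimes T) u} \le \norm{S}\,\norm{T}\,\norm{u}$.

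Given this estimate, density of the simple tensors furnishes a unique continuous extension $S \widehat{\otimes} T : K_1 \widehat{\otimes} L_1 \to K_2 \widehat{\otimes} L_2$ with $\norm{S \widehat{\otimes} T} \le \norm{S}\,\norm{T}$ and the prescribed action $(S \widehat{\otimes} T)(x \otimes y) = S(x) \otimes T(y)$; uniqueness of any such bounded operator is immediate from density. For the matching lower bound, evaluation on unit rank-one tensors gives $\norm{(S \widehat{\otimes} T)(x \otimes y)} = \norm{S(x)}\,\norm{T(y)}$, so taking suprema over unit vectors $x \in K_1$ and $y \in L_1$ produces $\norm{S \widehat{\otimes} T} \ge \norm{S}\,\norm{T}$. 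The only mildly delicate point is the orthonormal rewriting of $u$ before taking norms; without this reduction the cross-terms in $\norm{\sum_k e_k \otimes T z_k}^2$ are not manifestly controlled, but once the orthonormality is in place the remaining steps are entirely routine.
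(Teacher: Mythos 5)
The paper does not prove this theorem at all: it is imported verbatim from Helemskii \cite[Ch.~2]{hel2} as a cited result, so there is no internal proof to compare yours against. Judged on its own merits, your argument is correct and is essentially the standard textbook proof: the factorization $S\otimes T=(S\otimes I_{L_2})\circ(I_{K_1}\otimes T)$, the Gram--Schmidt rewriting $u=\sum_k e_k\otimes z_k$ with $\{e_k\}$ orthonormal so that $\|u\|^2=\sum_k\|z_k\|^2$ and the cross terms in $\|(I_{K_1}\otimes T)u\|^2$ vanish, the resulting bound $\|(S\otimes T)u\|\le\|S\|\,\|T\|\,\|u\|$ on the algebraic tensor product, extension by density, and the lower bound obtained from the cross-norm identity $\|S(x)\otimes T(y)\|=\|S(x)\|\,\|T(y)\|$ on elementary tensors. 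Two routine points deserve an explicit sentence. First, before estimating anything you need $S\otimes T$ to be well defined on the algebraic tensor product, i.e.\ independent of the representation of $u$ as a sum of simple tensors; this follows from the universal property of the algebraic tensor product applied to the bilinear map $(x,y)\mapsto S(x)\otimes T(y)$. Second, the paper defines the Hilbert tensor product abstractly via a universal property for Schmidt bilinear maps, not as the completion of the algebraic tensor product under the inner product $\ip{x_1\otimes y_1}{x_2\otimes y_2}=\ip{x_1}{x_2}\ip{y_1}{y_2}$; your appeal to density of the span of simple tensors is valid in the completion model (which is the model the paper itself uses when it forms $\H_1\widehat{\otimes}^h\H_2$), but strictly you should either invoke the equivalence of the two descriptions or note that the image of the canonical Schmidt bilinear map has dense span. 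Neither point is a gap; both are one-line remarks.
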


Having at our hands these notions, we construct the tensor product explicitly and show that it is a quasi *-algebra. 

$\H_1\otimes\H_2$ is built as the vector space tensor product of $\H_1$ and $\H_2$ over $\mathbb{C}$. $\Ao\otimes\Bo$ can be naturally regarded as a subspace of $\H_1\otimes\H_2$, since $\Ao$ and $\Bo$ are subspaces of $\H_1$ and $\H_2$ respectively. $\Ao$ and $\Bo$ are also *-algebras, thus their vector space tensor product $\Ao\otimes\Bo$ becomes a *-algebra if we define the product of two elements $z=\sum_{i=1}^nx_i\otimes y_i$, $z'=\sum_{j=1}^mx'_j\otimes y'_j$ in $\Ao\otimes\Bo$ as follows
$$zz':=\sum_{i=1}^n\sum_{j=1}^mx_ix_j'\otimes y_iy_j'$$
and an involution $\ast$ as 
$$z^*:=\sum_{i=1}^nx^*_i\otimes y^*_i.$$
Note that the product and the involution are well-defined (see \cite[pp. 188,189]{murphy}).
\smallskip

A general element $\zeta\in\H_1\otimes\H_2$ is of the form
$$\zeta=\sum_{i=1}^n\xi_i\otimes\eta_i,\quad\xi_i\in\H_1,\eta_i\in\H_2,\quad 1\leq i \leq n.$$
The $\mathbb{C}$-vector space $\H_1\otimes\H_2$ becomes a bimodule over the *-algebra $\Ao\otimes\Bo$ defining the module actions and the involution component-wise, i.e., for every $x\otimes y\in\Ao\otimes\Bo$, $\xi\otimes \eta\in\H_1\otimes\H_2$
\begin{align}\notag
&(x\otimes y)(\xi\otimes \eta)=x\xi\otimes y\eta\quad\text{and}\quad(\xi\otimes \eta)(x\otimes y)=\xi x\otimes \eta y;\\
\notag&(\xi\otimes \eta)^*=\xi^*\otimes \eta^*.
\end{align}
It is easily shown that the preceding operations are well defined, extending those defined in $\Ao\otimes\Bo$ (see \cite[pp. 188-189]{murphy} and \cite[Lemma 1.4, p. 361]{mal}).

All the requirements of Definition \ref{quasi} are easily verified by the properties of the quasi *-algebras $(\H_1,\Ao)$ and $(\H_2,\Bo)$. Thus, we conclude that \textit{$(\H_1\otimes\H_2,\Ao\otimes\Bo)$ is a quasi *-algebra}.
\smallskip

The second step consists of providing the quasi *-algebra $\H_1\otimes\H_2$ with a suitable inner product in the way it becomes a \textit{Hilbert} quasi *-algebra. 

If we denote by $\ip{\cdot}{\cdot}_1$ and $\ip{\cdot}{\cdot}_2$ the inner products of $\H_1$ and $\H_2$ respectively, the most natural choice is given by
\begin{equation}\label{inn_prod}
\ip{\zeta}{\zeta'}:=\sum_{i=1}^n\sum_{j=1}^{m}\ip{\xi_i}{\xi_j'}_1\ip{\eta_i}{\eta_j'}_2,\quad\forall\zeta,\zeta'\in\H_1\otimes\H_2,
\end{equation}
where $\zeta=\sum_{i=1}^n\xi_i\otimes\eta_i$ and $\zeta'=\sum_{j=1}^m\xi'_j\otimes\eta_j'$. By Lemma 1.1 in \cite[Ch. IV]{tak}, the map defined in \eqref{inn_prod} is a well-defined inner product on $\H_1\otimes\H_2$.

The norm $\|\cdot\|_h$ induced by the inner product in \eqref{inn_prod} is a \textit{cross-norm}, i.e., if $\zeta=\xi\otimes\eta$ with $\xi\in\H_1$, $\eta\in\H_2$ is an elementary tensor, then $\|\zeta\|_h=\|\xi\otimes\eta\|_h=\|\xi\|_1\,\|\eta\|_2$.
\smallskip

Taking the completion $\H_1\widehat{\otimes}^h\H_2$ of $\H_1\otimes^h\H_2$ with respect to the topology $h$ corresponding to $\|\cdot\|_h$, we obtain a Hilbert space, which is the Hilbert tensor product according to Definition \ref{ten_prod}. The tensor map $\Theta: \H_1\times\H_2\to\H_1\widehat{\otimes}^h\H_2$ is a Schmidt bilinear map. 

We want to show that $\H_1\widehat{\otimes}^h\H_2$ can be regarded as a Hilbert quasi *-algebra over $\Ao\otimes\Bo$.

Conditions (2) - (4) of Definition \ref{Hilb_qalg} are obviously verified by the properties of Hilbert quasi *-algebras $(\H_1,\Ao)$ and $(\H_2,\Bo)$. To show (1), we have to prove the continuity of the left multiplication operators
$$L_z:\H_1\widehat{\otimes}^h\H_2\to\H_1\widehat{\otimes}^h\H_2\quad\text{defined as}\quad\zeta\mapsto L_z(\zeta)=\zeta z$$
for every $z\in\Ao\otimes\Bo$. Without loss of generality, we can assume that $z=x\otimes y$, for $x\otimes y\in\Ao\otimes\Bo$. It is enough to show that the operator $L_z$ restricted to $\H_1\otimes\H_2$ is continuous.

The restriction of $L_{x\otimes y}$ to $\H_1\otimes\H_2$ coincides with the operator $L_x\otimes L_y$, thus, by Theorem \ref{cts_op}, it is continuous since the operators $L_x$, $L_y$ are continuous.

What remains to be shown is that in fact the completion $\H_1\widehat{\otimes}^h\H_2$ is the same as $\Ao\widehat{\otimes}^h\Bo$. For our aim, it is enough to show that $\Ao\otimes\Bo$ is $h$-dense in $\H_1\otimes\H_2$. Without loss of generality, we show the statement only for elementary tensors in $\H_1\otimes\H_2$.

Let $\xi\otimes\eta$ an elementary tensor in $\H_1\otimes\H_2$. By the properties of Hilbert quasi *-algebra of $\H_1$ and $\H_2$, there exist sequences $\{x_n\}$ and $\{y_n\}$ in $\Ao$ and $\Bo$ respectively, such that $\|x_n-\xi\|_1\to0$ and $\|y_n-\eta\|_2\to0$. By the cross-norm property of $\|\cdot\|_h$, the sequence $\{x_n\otimes y_n\}$ in $\Ao\otimes\Bo$ approximates $\xi\otimes\eta$.

Since $\Ao\otimes\Bo$ is dense in $\H_1\otimes\H_2$, we can deduce that
$$\Ao\widehat{\otimes}^h\Bo\cong\H_1\widehat{\otimes}^h\H_2.$$

In conclusion, we have proved the following

\begin{theorem}\label{tensor}
	Let $(\H_1,\Ao)$ and $(\H_2,\Bo)$ be unital Hilbert quasi *-algebras. Then the completion $\Ao\widehat{\otimes}^h\Bo\cong\H_1\widehat{\otimes}^h\H_2$ of $\Ao\otimes\Bo$ with respect to the inner product \eqref{inn_prod} is a unital Hilbert quasi *-algebra.
\end{theorem}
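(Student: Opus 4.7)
The plan is to assemble, in clean sequence, the scattered facts already sketched between Theorem \ref{cts_op} and the theorem statement, and to verify each of the four axioms of Definition \ref{Hilb_qalg} for the candidate pair $(\H_1\widehat{\otimes}^h\H_2,\Ao\otimes\Bo)$. First, I would record that $\Ao\otimes\Bo$ inherits a *-algebra structure from the factors via the componentwise product and involution, that these operations extend componentwise to make $\H_1\otimes\H_2$ an $\Ao\otimes\Bo$-bimodule with involution, and that the resulting pair $(\H_1\otimes\H_2,\Ao\otimes\Bo)$ fulfills Definition \ref{quasi} (all associativity and adjoint relations follow coordinatewise from the corresponding relations in the factor quasi *-algebras). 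The unit of $\Ao\otimes\Bo$ is $\id_{\Ao}\otimes\id_{\Bo}$.

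Next I would equip $\Ao\otimes\Bo$ with the inner product defined by \eqref{inn_prod}; its well-definedness and positivity come from Lemma~1.1 in \cite[Ch.~IV]{tak}, and the induced norm $\|\cdot\|_h$ is a cross-norm. Passing to the Hilbert space completion $\H_1\widehat{\otimes}^h\H_2$, the universal property of the Hilbert tensor product (Definition \ref{ten_prod}) identifies this completion with $\Ao\widehat{\otimes}^h\Bo$ provided I can show $\Ao\otimes\Bo$ is $\|\cdot\|_h$-dense in $\H_1\otimes\H_2$; for this I would approximate an elementary tensor $\xi\otimes\eta$ by $x_n\otimes y_n$ with $x_n\to\xi$ in $\H_1$ and $y_n\to\eta$ in $\H_2$, noting that by the cross-norm property
\[
\|\xi\otimes\eta-x_n\otimes y_n\|_h\leq\|\xi-x_n\|_1\,\|\eta\|_2+\|x_n\|_1\,\|\eta-y_n\|_2\to 0,
\]
and then extend by linearity to finite sums. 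Conditions (2), (3), and (4) of Definition \ref{Hilb_qalg} then follow immediately: the adjoint/inner-product compatibility relations are verified on elementary tensors by componentwise use of the corresponding identities in $(\H_1,\Ao)$ and $(\H_2,\Bo)$, and totality of $(\Ao\otimes\Bo)^2$ in $\Ao\otimes\Bo$ reduces to totality of $\Ao^2$ in $\Ao$ and $\Bo^2$ in $\Bo$.

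The main obstacle, and the only step that is not purely bookkeeping, is condition (1): the continuity of left multiplication $L_z$ by an element $z\in\Ao\otimes\Bo$ on $\H_1\widehat{\otimes}^h\H_2$. By linearity it suffices to treat an elementary $z=x\otimes y$. Here I would use that, since $(\H_j,\cdot_j)$ are Hilbert quasi *-algebras, the maps $u\mapsto xu$ on $\Ao$ and $v\mapsto yv$ on $\Bo$ are bounded in the respective Hilbert norms and hence extend uniquely to bounded operators $L_x\in\mathcal{B}(\H_1)$, $L_y\in\mathcal{B}(\H_2)$. Theorem \ref{cts_op} then produces a bounded operator $L_x\widehat{\otimes}L_y$ on $\H_1\widehat{\otimes}^h\H_2$ whose restriction to the algebraic tensor product $\H_1\otimes\H_2$ agrees, on elementary tensors, with $L_{x\otimes y}$; by linearity and density it agrees everywhere, so $L_{x\otimes y}$ is bounded, with norm at most $\|L_x\|\,\|L_y\|$. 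Finite linear combinations preserve boundedness, completing the proof.
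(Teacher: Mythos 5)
Your proposal is correct and follows essentially the same route as the paper: building the quasi *-algebra structure componentwise, invoking Takesaki's lemma for the inner product \eqref{inn_prod} and its cross-norm property, proving $h$-density of $\Ao\otimes\Bo$ in $\H_1\otimes\H_2$ via approximation of elementary tensors, and reducing condition (1) of Definition \ref{Hilb_qalg} to boundedness of $L_x\widehat{\otimes}L_y$ via Theorem \ref{cts_op}. Your explicit triangle-inequality estimate for the density step and the remark on the unit $\id_{\Ao}\otimes\id_{\Bo}$ merely make explicit what the paper leaves implicit.
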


\begin{definition}
Let $(\H_1,\Ao)$, $(\H_2,\Bo)$ be unital Hilbert quasi *-alge\-bras. Then the Hilbert quasi *-algebra $(\H_1\widehat{\otimes}^h\H_2,\Ao\otimes\Bo)$ constructed in Theorem \ref{tensor} will be called {\bf tensor product Hilbert quasi *-algebra}.
\end{definition}

For the tensor product Hilbert quasi *-algebra $(\H_1\widehat{\otimes}^h\H_2,\Ao\otimes\Bo)$ obtained in Theorem \ref{tensor} apply all the representability results obtained for Hilbert quasi *-algebras in \cite{AT}.

If $\Omega:\H_1\widehat{\otimes}^h\H_2\to\mathbb{C}$ is a representable and continuous functional, then by Proposition \ref{prop1}, the domain of the sesquilinear form $\overline{\varphi}_{\Omega}$ is the whole $\H_1\widehat{\otimes}^h\H_2$.

Moreover, \textit{Hilbert quasi *-algebras are always *-semisimple}. Hence, $(\H_1\widehat{\otimes}^h\H_2,\Ao\otimes\Bo)$ is *-semisimple, and, by Theorem \ref{thm_fullrep_semis} and Remark \ref{rem_thm}, it is also fully representable.
\smallskip

We conclude then with the following

\begin{corollary}\label{fr_ss}
	Let $(\H_1,\Ao)$ and $(\H_2,\Bo)$ be unital Hilbert quasi *-algebras. Then the tensor product Hilbert quasi *-algebra $(\H_1\widehat{\otimes}^h\H_2,$ $\Ao\otimes\Bo)$ is *-semisimple and fully representable.
\end{corollary}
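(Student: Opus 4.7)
The plan is to reduce the corollary to two inputs already available in the paper: the structural fact from Theorem \ref{tensor} that $(\H_1\widehat{\otimes}^h\H_2,\Ao\otimes\Bo)$ is a unital Hilbert quasi *-algebra, and the implication (iii)$\Rightarrow$(ii) of Theorem \ref{thm_fullrep_semis}, which by Remark \ref{rem_thm} does not need the positivity condition (P). So the only genuine work is to verify *-semisimplicity; once this is done, full representability follows for free.

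For *-semisimplicity, the natural candidate sesquilinear form separating nonzero elements is the inner product $\ip{\cdot}{\cdot}$ on $\H_1\widehat{\otimes}^h\H_2$ itself. I would check the three defining properties of $\mathcal{S}_{\Ao\otimes\Bo}(\H_1\widehat{\otimes}^h\H_2)$ in turn. Positivity $\ip{\zeta}{\zeta}\ge 0$ and the contractivity bound $|\ip{\zeta}{\zeta'}|\le\|\zeta\|_h\|\zeta'\|_h$ are immediate from Cauchy--Schwarz. The invariance $\ip{az}{w}=\ip{z}{a^{*}w}$ for $a\in\H_1\widehat{\otimes}^h\H_2$ and $z,w\in\Ao\otimes\Bo$ holds on elementary tensors in $\Ao\otimes\Bo$ by applying condition (2) of Definition \ref{Hilb_qalg} componentwise in each factor, and then extends to the bigger domain by density of $\Ao\otimes\Bo$ in $\H_1\widehat{\otimes}^h\H_2$ together with the continuity of the module action established in the proof of Theorem \ref{tensor}. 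Since $\ip{\zeta}{\zeta}>0$ for every nonzero $\zeta$, this sesquilinear form witnesses *-semisimplicity directly from Definition \ref{def1}.

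With *-semisimplicity in hand, applying Theorem \ref{thm_fullrep_semis} together with Remark \ref{rem_thm} yields full representability, completing the proof. The only step that requires a moment's care is the extension of the invariance identity from $\Ao\otimes\Bo$ to the whole tensor product, because property (ii) of $\mathcal{S}_{\Ao\otimes\Bo}$ allows the first argument of the module action to range over all of $\H_1\widehat{\otimes}^h\H_2$, not just the *-algebra; but this is standard once joint continuity of the multiplication and density are both at our disposal. No further obstacle is expected.
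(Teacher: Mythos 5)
Your proposal is correct and follows essentially the same route as the paper: establish that the tensor product is a Hilbert quasi *-algebra (Theorem \ref{tensor}), note it is *-semisimple, and then invoke Theorem \ref{thm_fullrep_semis} together with Remark \ref{rem_thm} to obtain full representability without needing condition $(P)$. The only difference is that where the paper simply cites the known fact from \cite{AT} that Hilbert quasi *-algebras are always *-semisimple, you prove it inline by exhibiting the inner product as the separating form in $\mathcal{S}_{\Ao\otimes\Bo}(\H_1\widehat{\otimes}^h\H_2)$, which is precisely the standard argument underlying that fact (and note that for the density step only separate continuity of the module action, not joint continuity, is needed or available).
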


By Remark \ref{bou}, the Hilbert quasi *-algebra $(\H_1\widehat{\otimes}^h\H_2,\Ao\otimes\Bo)$ is also \textit{fully closable}. Note that this property can be now derived directly by Corollary \ref{fr_ss}.

Corollary \ref{fr_ss} highlights the important fact that the representability properties are \textit{maintained} passing from the factors to the tensor product. Moreover, given a couple of representable and continuous functionals defined on the factors, the tensor product of these functionals turns out to be \textit{always} representable and continuous, as shown in Proposition \ref{repc_tens} in the next section. 
\smallskip

The same question about representability has been investigated in a more general setting of Banach quasi *-algebras in \cite{AF}.

\section{Representability for tensor product Hilbert quasi *-algebras}
In this section, we investigate how representability of functionals passes to their tensor product. Since for Hilbert quasi *-algebras representable and continuous functionals have been characterized in terms of weakly positive bounded elements, we first prove a lemma about the tensor product of these elements.

\begin{lemma}\label{wb}
Let $(\H_1\widehat{\otimes}^h\H_2,\Ao\otimes\Bo)$ be the tensor product Hilbert quasi *-algebra of $(\H_1,\Ao)$ and $(\H_2,\Bo)$, as in Theorem \ref{tensor}. Then, we have the following inclusions
\begin{itemize}
	\item[(i)] $(\H_1)^+_w\otimes(\H_2)_w^+\subseteq(\H_1\widehat{\otimes}^h\H_2)^+_w$.
	\item[(ii)] $(\H_1)_{\bou}\otimes(\H_2)_{\bou}\subseteq(\H_1\widehat{\otimes}^h\H_2)_{\bou}$.
\end{itemize}
\end{lemma}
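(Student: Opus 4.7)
My plan is to reduce each inclusion to the case of a simple tensor $\xi\otimes\eta$ with factors of the appropriate type and then exploit the multiplicative form of the inner product \eqref{inn_prod}. Both assertions are preserved under finite sums (the assignment $\zeta\mapsto L_\zeta$ is linear in $\zeta$, and weak positivity and boundedness are stable under addition of the corresponding operators), so passing from simple tensors to arbitrary elements of $(\H_1)^+_w\otimes(\H_2)^+_w$ and $(\H_1)_{\bou}\otimes(\H_2)_{\bou}$ is routine.

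For (i), take $\xi\in(\H_1)^+_w$, $\eta\in(\H_2)^+_w$, and any $z=\sum_{i=1}^n x_i\otimes y_i\in\Ao\otimes\Bo$. Expanding via \eqref{inn_prod} gives
\[
\ip{(\xi\otimes\eta)z}{z}_h=\sum_{i,j=1}^n\ip{\xi x_i}{x_j}_1\ip{\eta y_i}{y_j}_2=\sum_{i,j=1}^n M_{ij}N_{ij},
\]
where $M_{ij}:=\ip{\xi x_i}{x_j}_1$ and $N_{ij}:=\ip{\eta y_i}{y_j}_2$. For every $c=(c_1,\dots,c_n)\in\mathbb{C}^n$, weak positivity of $\xi$ yields
\[
\sum_{i,j} c_i\,\overline{c_j}\,M_{ij}=\ip{\xi\bigl({\textstyle\sum_i c_ix_i}\bigr)}{{\textstyle\sum_j c_j x_j}}_1\ge 0,
\]
so $M$ is a Hermitian positive semidefinite matrix (Hermiticity follows in the complex setting from the polarization identity); the same reasoning applies to $N$. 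The Schur product theorem then ensures that the Hadamard product $M\circ N$ is positive semidefinite, hence $\sum_{i,j}M_{ij}N_{ij}\ge 0$. This proves that $\xi\otimes\eta\in(\H_1\widehat{\otimes}^h\H_2)^+_w$, and (i) follows by linearity.

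For (ii), let $\xi\in(\H_1)_{\bou}$ and $\eta\in(\H_2)_{\bou}$. By Remark \ref{bou} the closures $\overline{L}_\xi$ and $\overline{L}_\eta$ extend to everywhere defined, bounded operators on $\H_1$ and $\H_2$. Theorem \ref{cts_op} then supplies a bounded operator $\overline{L}_\xi\widehat{\otimes}\overline{L}_\eta$ on $\H_1\widehat{\otimes}^h\H_2$ whose action on an elementary tensor $x\otimes y\in\Ao\otimes\Bo$ is
\[
(\overline{L}_\xi\widehat{\otimes}\overline{L}_\eta)(x\otimes y)=\xi x\otimes\eta y=(\xi\otimes\eta)(x\otimes y)=L_{\xi\otimes\eta}(x\otimes y).
\]
Hence $L_{\xi\otimes\eta}$ agrees on the dense subspace $\Ao\otimes\Bo$ with the restriction of a bounded operator and is therefore bounded, giving $\xi\otimes\eta\in(\H_1\widehat{\otimes}^h\H_2)_{\bou}$.

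The substantive step is (i): the key insight is that weak positivity of each factor produces a Gram-type positive semidefinite matrix, after which the Schur product theorem immediately delivers the conclusion. Part (ii) is essentially formal, since Theorem \ref{cts_op} already packages the work into the construction of $\overline{L}_\xi\widehat{\otimes}\overline{L}_\eta$.
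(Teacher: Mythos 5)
Your proof is correct, but part (i) takes a genuinely different route from the paper. The paper's argument is operator-theoretic: it passes to positive self-adjoint extensions $R'_{\eta_1}, R'_{\eta_2}$ of the densely defined positive multiplication operators (a Friedrichs-type extension step), takes operator square roots, and rewrites $\ip{z}{R_{\eta_1\otimes\eta_2}z}$ as $\sum_{i,j}\ip{x'_i}{x'_j}_1\ip{y'_i}{y'_j}_2$ with $x'_i=(R'_{\eta_1})^{1/2}x_i$, $y'_i=(R'_{\eta_2})^{1/2}y_i$, concluding positivity via a Gram--Schmidt orthogonalization (in effect, recognizing the sum as $\|\sum_i x'_i\otimes y'_i\|_h^2$). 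You instead observe that weak positivity of each factor makes the matrices $M_{ij}=\ip{\xi x_i}{x_j}_1$ and $N_{ij}=\ip{\eta y_i}{y_j}_2$ positive semidefinite and invoke the Schur product theorem, summing the entries of $M\circ N$ against the all-ones vector. Your route is purely finite-dimensional linear algebra: it bypasses the existence of self-adjoint extensions and the functional calculus entirely, and it also makes explicit the reduction from finite sums of elementary tensors to a single elementary tensor, which the paper leaves implicit. What the paper's approach buys in exchange is the conceptual statement that $R_{\eta_1\otimes\eta_2}$ is the restriction of a tensor product of positive operators, which is closer in spirit to how part (ii) is handled. For part (ii) your argument (extend $L_\xi$, $L_\eta$ by Remark \ref{bou}, apply Theorem \ref{cts_op}, and identify $L_{\xi\otimes\eta}$ with the restriction of $\overline{L}_\xi\widehat{\otimes}\overline{L}_\eta$ to the dense subspace $\Ao\otimes\Bo$) coincides with the paper's.
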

\begin{proof}
Let $\eta_1\in(\H_1)^+_w$ and $\eta_2\in(\H_2)^+_w$. Then, the multiplication operators $R_{\eta_1}:\Ao\to\H_1$ and $R_{\eta_2}:\Bo\to\H_2$ are densely defined and positive. Let $R'_{\eta_1}$ and $R'_{\eta_2}$ be positive self-adjoint extensions of $R_{\eta_1}$ and $R_{\eta_2}$ respectively. We have $R_{\eta_1\otimes\eta_2}=R_{\eta_1}\otimes R_{\eta_2}$ on $\Ao\otimes\Bo$.

The tensor product $\eta_1\otimes\eta_2$ is again a weakly positive element. Indeed, let $z=\sum_{i=1}^nx_i\otimes y_i$ be in $\Ao\otimes\Bo$, then

\begin{align*}
\ip{z}{R_{\eta_1\otimes\eta_2}z}&=\ip{\sum_{i=1}^nx_i\otimes y_i}{(R_{\eta_1}\otimes R_{\eta_2})\left(\sum_{j=1}^nx_j\otimes y_j\right)}\\
&=\sum_{i,j=1}^n\ip{x_i}{R_{\eta_1}x_j}_1\ip{y_i}{R_{\eta_2}y_j}_2\\
&=\sum_{i,j=1}^n\ip{x_i}{R'_{\eta_1}x_j}_1\ip{y_i}{R'_{\eta_2}y_j}_2\\
&=\sum_{i,j=1}^n\ip{(R'_{\eta_1})^{\frac12}x_i}{(R'_{\eta_1})^{\frac12}x_j}_1\ip{(R'_{\eta_2})^{\frac12}y_i}{(R'_{\eta_2})^{\frac12}y_j}_2\\
&=\sum_{i,j=1}^n\ip{x'_i}{x'_j}_1\ip{y'_i}{y'_j}_2,
\end{align*}

where $x'_i=(R'_{\eta_1})^{\frac12}x_i\in\H_1$ and $y'_i=(R'_{\eta_2})^{\frac12}y_i\in\H_2$, for every $i=1,\ldots,n$. Applying the Gram-Schmidt orthogonalization, we can assume that $\{x'_i\}_{i=1}^n$ are orthogonal. Hence,
$$\ip{z}{R_{\eta_1\otimes\eta_2}z}=\sum_{i,j=1}^n\ip{x'_i}{x'_j}_1\ip{y'_i}{y'_j}_2=\sum_{i=1}^n\ip{x'_i}{x'_i}_1\ip{y'_i}{y'_i}_2\geq0.$$

Let $\chi_1\in(\H_1)_{\bou}$ and $\chi_2\in(\H_2)_{\bou}$. Then $\chi_1\otimes\chi_2$ is also bounded. Indeed, the left multiplication operator $L_{\chi_1\otimes\chi_2}:\H_1\otimes\H_2\to\H_1\widehat{\otimes}^h\H_2$ is equal to $L_{\chi_1}\otimes L_{\chi_2}$. Thus, it is bounded by Theorem \ref{cts_op} and continuity of $L_{\chi_1}$ and $L_{\chi_2}$.
\end{proof}


\begin{proposition}\label{repc_tens}
Let $(\H_1,\Ao)$, $(\H_2,\Bo)$ be unital Hilbert quasi *-algebras. Let $(\H_1\widehat{\otimes}^h\H_2,\Ao\otimes\Bo)$ be the tensor product Hilbert quasi *-algebra. If $\omega_1$ and $\omega_2$ are representable and continuous functionals on $\H_1$ and $\H_2$ respectively, then $\omega_1\otimes\omega_2$ extends continuously to a representable and continuous functional on $\H_1\widehat{\otimes}^h\H_2$.
\end{proposition}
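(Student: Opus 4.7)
The plan is to use Theorem \ref{Hrepc} to reduce everything to the statement that the Hilbert tensor product of two weakly positive bounded elements is itself weakly positive and bounded, which is exactly Lemma \ref{wb}. This way, representability and continuity of $\omega_1 \otimes \omega_2$ drop out essentially for free, once the natural continuous extension to $\H_1 \widehat{\otimes}^h \H_2$ is identified.

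First, I would invoke Theorem \ref{Hrepc} applied to $(\H_1,\Ao)$ and $(\H_2,\Bo)$ separately to produce unique weakly positive bounded elements $\eta_1 \in (\H_1)\bou \cap (\H_1)_w^+$ and $\eta_2 \in (\H_2)\bou \cap (\H_2)_w^+$ such that
\[
\omega_1(\xi) = \ip{\xi}{\eta_1}_1 \text{ for all } \xi \in \H_1, \qquad \omega_2(\eta) = \ip{\eta}{\eta_2}_2 \text{ for all } \eta \in \H_2.
\]
Then I would form $\eta_1 \otimes \eta_2 \in \H_1 \widehat{\otimes}^h \H_2$, and use both statements (i) and (ii) of Lemma \ref{wb} to conclude that $\eta_1 \otimes \eta_2$ is simultaneously weakly positive and bounded in the tensor product Hilbert quasi *-algebra.

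Next I would define the candidate extension
\[
\Omega(\zeta) := \ip{\zeta}{\eta_1 \otimes \eta_2}, \quad \zeta \in \H_1 \widehat{\otimes}^h \H_2.
\]
By the other direction of Theorem \ref{Hrepc} (which applies since $(\H_1\widehat{\otimes}^h\H_2, \Ao\otimes\Bo)$ is a Hilbert quasi *-algebra by Theorem \ref{tensor}, and $\eta_1 \otimes \eta_2$ is weakly positive and bounded by Lemma \ref{wb}), the functional $\Omega$ belongs to $\mathcal{R}_c(\H_1\widehat{\otimes}^h\H_2, \Ao\otimes\Bo)$.

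Finally, I would verify that $\Omega$ extends $\omega_1 \otimes \omega_2$. On an elementary tensor $\xi \otimes \eta \in \H_1 \otimes \H_2$, the definition of $\omega_1 \otimes \omega_2$ and the cross-norm property of the inner product \eqref{inn_prod} give
\[
(\omega_1 \otimes \omega_2)(\xi \otimes \eta) = \omega_1(\xi)\,\omega_2(\eta) = \ip{\xi}{\eta_1}_1 \ip{\eta}{\eta_2}_2 = \ip{\xi \otimes \eta}{\eta_1 \otimes \eta_2} = \Omega(\xi \otimes \eta),
\]
and the equality extends by linearity to $\H_1 \otimes \H_2$ and then by continuity (and density, shown in the construction preceding Theorem \ref{tensor}) to all of $\H_1 \widehat{\otimes}^h \H_2$. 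The only mildly delicate point is checking that the algebraic functional $\omega_1 \otimes \omega_2$ is actually well-defined on $\Ao \otimes \Bo$ (so that talking about its continuous extension makes sense); this is standard from the universal property of the algebraic tensor product applied to the bilinear map $(\xi,\eta) \mapsto \omega_1(\xi)\omega_2(\eta)$, so I expect no real obstruction there. Thus the whole argument is essentially an application of Lemma \ref{wb} plus Theorem \ref{Hrepc}, with the main conceptual step being the identification of $\Omega$ with the continuous extension of $\omega_1 \otimes \omega_2$.
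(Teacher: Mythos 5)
Your proposal is correct and follows essentially the same route as the paper's proof: both rest on Theorem \ref{Hrepc} (in both directions), Lemma \ref{wb}, and the identity $(\omega_1\otimes\omega_2)(\xi\otimes\eta)=\ip{\xi\otimes\eta}{\eta_1\otimes\eta_2}$ coming from the definition of the inner product \eqref{inn_prod}. The only difference is presentational — you define $\Omega$ directly by the inner-product formula and then check it extends $\omega_1\otimes\omega_2$, whereas the paper first uses that formula to see $\omega_1\otimes\omega_2$ is continuous on $\H_1\otimes^h\H_2$, takes its continuous extension, and then identifies it with the inner-product functional — which is an equivalent ordering of the same argument.
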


\begin{proof}
Let $\omega_1$ and $\omega_2$  be representable and continuous functionals on $\H_1$ and $\H_2$ respectively. By Theorem \ref{Hrepc}, there exist weakly positive bounded elements $\chi_1\in\H_1$ and $\chi_2\in\H_2$ such that
\begin{align}\label{thm}&\omega_1(\eta_1)=\ip{\eta_1}{\chi_1}_1,\quad\forall\eta_1\in\H_1,\\
\label{thm_2}&\omega_2(\eta_2)=\ip{\eta_2}{\chi_2}_2,\quad\forall\eta_2\in\H_2.
\end{align}
Define now $\omega_1\otimes\omega_2$ on $\H_1\otimes^h\H_2$ as 
\footnotesize{$$\omega_1\otimes\omega_2\left(\sum_{i=1}^n\xi_i\otimes\eta_i\right):=\sum_{i=1}^n\omega_1(\xi_i)\omega_2(\eta_i),\quad\forall\sum_{i=1}^n\xi_i\otimes\eta_i\in\H_1\otimes^h\H_2.$$}
\normalsize$\omega_1\otimes\omega_2$ is a well defined linear map on $\H_1\otimes^h\H_2$. By \eqref{thm} and \eqref{thm_2},
\small{$$\omega_1\otimes\omega_2\left(\sum_{i=1}^n\xi_i\otimes\eta_i\right)=\sum_{i=1}^n\ip{\xi_i}{\chi_1}_1\ip{\eta_i}{\chi_2}_2=\ip{\sum_{i=1}^n\xi_i\otimes\eta_i}{\chi_1\otimes\chi_2}_h$$}
\normalsize for all $\textstyle\sum_{i=1}^n\xi_i\otimes\eta_i\in\H_1\otimes^h\H_2$. Hence, $\omega_1\otimes\omega_2$ is continuous and denote by $\Omega$ its continuous extension to $\H_1\widehat{\otimes}^h\H_2$.
\smallskip

By Lemma \ref{wb}, $\chi_1\otimes\chi_2$ is weakly positive and bounded on $\H_1\widehat{\otimes}^h\H_2$. Therefore, $\Omega$ is representable and continuous. Indeed, if $\left\{\sum_{i=1}^{k_n}x^i_n\otimes y^i_n\right\}$ is a sequence of elements in $\Ao\otimes\Bo$ approximating $\psi\in\H_1\widehat{\otimes}^h\H_2$, then $\Omega$ is given by
\small{$$\Omega(\psi):=\lim_{n\to+\infty}\omega_1\otimes\omega_2\left(\sum_{i=1}^{k_n}x^i_n\otimes y^i_n\right),\quad\forall\psi\in\H_1\widehat{\otimes}^h\H_2.$$}
\normalsize By the properties of $\omega_1$ and $\omega_2$, 
\small{\begin{align*}
\Omega(\psi)&=\lim_{n\to+\infty}\omega_1\otimes\omega_2\left(\sum_{i=1}^{k_n}x^i_n\otimes y^i_n\right)\\
&=\lim_{n\to+\infty}\ip{\sum_{i=1}^{k_n}x^i_n\otimes y^i_n}{\chi_1\otimes\chi_2}\\
&=\ip{\psi}{\chi_1\otimes\chi_2},
\end{align*}}
\normalsize for all $\psi\in\H_1\widehat{\otimes}^h\H_2$. By Theorem \ref{Hrepc} we get the representability of $\Omega$ on $\H_1\widehat{\otimes}^h\H_2$.
\end{proof}

\section*{Final remarks and open problems}
A key role in looking at structure properties of locally convex quasi *-algebras is played by representable (and continuous) functionals (see \cite{AT,Bag5,Frag2,Trap1}). In this paper we constructed a new Hilbert quasi *-algebra as the tensor product of two given ones and then we studied its properties, based on the results obtained in \cite{AT}. Nevertheless, many interesting questions remained open.

It would be of interest to understand which is the relationship between bounded elements in the factors $(\H_1,\Ao)$ and $(\H_2,\Bo)$ and those in the tensor product $(\H_1\widehat{\otimes}^h\H_2,\Ao\otimes\Bo)$. A similar question can be posed for weakly positive elements.

In (ii) of Lemma \ref{wb}, we have shown that the tensor product of $(\H_1)_{\bou}$ and $(\H_2)_{\bou}$ is contained in the C*-algebra $(\H_1\widehat{\otimes}^h\H_2)_{\bou}$ (see also \cite[Proposition 4.9]{AT}). It is unknown if it is isomorphic of a certain C*-completion of 
the tensor product $(\H_1)_{\bou}\otimes(\H_2)_{\bou}$. 

Answering these questions concerns the study of representable and continuous functionals on the tensor product Hilbert quasi *-algebra. In particular, if $\Omega\in\mathcal{R}_c(\H_1\widehat{\otimes}^h\H_2,\Ao\otimes\Bo)$, the restrictions $\omega_1$ on $\H_1$ and $\omega_2$ on $\H_2$ of $\Omega$ belong to $\mathcal{R}_c(\H_1,\Ao)$ and $\mathcal{R}_c(\H_2,\Bo)$ respectively. By Proposition \ref{repc_tens}, the tensor product $\omega_1\otimes\omega_2$ extends to a functional $\widetilde{\Omega}$ in $\mathcal{R}_c(\H_1\widehat{\otimes}^h\H_2,\Ao\otimes\Bo)$. A natural question to ask is under which conditions this extension corresponds to $\Omega$. 

The construction given in Theorem \ref{tensor} could be useful to exhibit other examples of Hilbert quasi *-algebras $(\H,\Ao)$ for which $\mathcal{R}_c(\H,\Ao)$ is equal to $\mathcal{R}(\H,\Ao)$. By Propositions 5.1 and 5.2 of \cite{AT}, this is true for the Hilbert quasi *-algebras $(L^2(I,d\lambda),\mathcal{C}(I))$ and $(L^2(I,d\lambda),L^\infty(I,d\lambda))$, where $I$ is a compact interval of the real line and $\lambda$ is the Lebesgue measure.

We leave these questions about tensor product of Hilbert quasi *-algebras for future papers, with the thought that they can contribute to give a better understanding of locally convex quasi *-algebras and their tensor products. 
\smallskip

{\bf{Acknowledgment:} }
The author is grateful to the Organizers of the 27th International Conference in Operator Theory for this wonderful conference and the West University of Timisoara for its hospitality. The author also wishes to thank prof. M. Fragoulopoulou for interesting discussions and suggestions.

\end{document}